\newtheorem{theorem}[subsection]{Theorem}
\newtheorem{remark}[subsection]{Remark}
\newtheorem{proposition}[subsection]{Proposition}
\newcommand{\R}{\mathbb{R}}
\newfont{\bms}{msbm10 scaled 1200}
\newfont{\bmt}{msbm10 scaled 2500}
\newfont{\bmtt}{msbm10 scaled 1700}
\title{Examples of infinitesimal non-trivial accumulation of secants in dimension three}
\author{
        Andr\'e Belotto \\
        University of Toronto\\
        andre.belottodasilva@utoronto.ca
}
\date{10/2014}
\begin{document}
\maketitle
\section*{Abstract}
We present new examples of accumulation of secants for orbits (of a real analytic three dimensional vector fields) having the origin as only $\omega$-limit point. These new examples have the structure of a proper algebraic variety of $\mathbb{S}^2$ intersected with a cone. In particular, we present explicit examples of accumulation of secants sets which are not in the list of possibilities of the classical Poincaré-Bendixson Theorem.\\
\\
%
{\bf Keywords} Accumulation of secants, Ordinary Differential Equations, Real analytic geometry.
\def\contentsname{Contents}
\tableofcontents 
\thispagestyle{empty}
\newpage
\section{Introduction}
Let $X$ be a real analytic vector-field defined in a neighborhood of the origin of $\mathbb{R}^3$ and assume that the origin is a singularity of $X$. Consider a regular orbit $\gamma(t)$ of $X$ such that the origin is the only $\omega$-limit point of $\gamma(t)$, i.e $lim_{t\rightarrow \infty} \gamma(t) = (0,0,0)$. A classical problem is to study the qualitative behavior of the trajectory $\gamma(t)$ (see, for example, \cite{M1,M2,M3}). A modern approach to this problem is to ask for a description of the \textit{accumulation of secants} of $\gamma(t)$.\\
\\
More precisely, let the curve $\frac{\gamma(t)}{\|\gamma(t)\|} \subset \mathbb{S}^{2}$ be the \textit{secants} curve of $\gamma(t)$. The \textit{accumulation of secants} is the set: 
$$
Sec(\gamma(t)) = \bigcap_{s} \overline{\{ \gamma(t) / \|\gamma(t)\| ; t \geq s  \}} \subset \mathbb{S}^{2}
$$
In the literature we can find descriptions of accumulation os secants for some classes of three dimensional vector-fields. For example:
\begin{itemize}
\item In $\cite{Fel}$ the authors provide a description of the accumulation of secants $Sec(\gamma(t))$ under the hypotheses that the origin is a \textit{generic absolutely isolated singularity} (see definition in the same article);
\item In \cite{Fel1,Fel2,Fel3,Fel4}, the authors study properties of orbits under some additional oscillating hypotheses (as, the so-called, \textit{sub-analytically non-oscillating} property).
\end{itemize}
But, the problem of giving a description of the accumulation on secants $Sec(\gamma(t))$ for general three dimensional vector-fields is widely open. In one hand, it is still an open question whether there exists an accumulation of secants set $Sec(\gamma(t))$ which is dense on the sphere. At another hand, all examples in the literature are contained in the list of possibilities given by the Poincaré-Bendixson Theorem.\\
\\
In this work, we prove that a connected component of a proper algebraic variety of $\mathbb{S}^2$ intersected with a cone may be realized as an accumulation of secants set. In particular, we present a wide class of examples of accumulations of secants sets which are not in the list of possibilities of the Poincaré-Bendixson Theorem (see figure ($\ref{fig:1}$) for an example). Furthermore, the vector-fields that give rise to these new examples are explicit and polynomial.\\
\\
In order to be precise, consider a non-zero real-polynomial $h \in \mathbb{R}[x,y]$ and let $\Gamma$ be the semi-algebraic set $V(h) \cap B_1(0)$, where $B_1(0)$ is the closed ball with radius one and center at the origin and $V(h) = \{p \in \mathbb{R}^2; h(p)=0\}$ is the algebraic variety given by the zeros of the polynomial $h$. We say that the polynomial $h$ is \textit{adapted} if:
\begin{itemize}
\item The set $\Gamma$ is connected and non-empty;
\item The algebraic variety $V(h,\frac{\partial}{\partial x} h,\frac{\partial}{\partial y} h) = V(h,h_x,h_y)$ is equal to a finite set of points; 
\item The set $V(h) \cap \partial B_1(0)$ is a finite set of points such that $<\nabla^{\bot} h(p),p> \neq 0$, where $\partial B_1(0)$ stands for the border of $B_1(0)$.
\end{itemize}
Now, consider the following chart of $\mathbb{S}^2$:
$$
\begin{array}{cccl}
\alpha: & \mathbb{R}^2 & \longrightarrow & \mathbb{S}^2 \\
 & (x,y) & \mapsto & \left(\frac{x}{\sqrt{x^2+y^2+1}},\frac{y}{\sqrt{x^2+y^2+1}},\frac{1}{\sqrt{x^2+y^2+1}}\right)
\end{array}
$$
With this notation, the main result of this work can be precisely enunciated:
\begin{theorem}
Let $h \in \mathbb{R}[x,y]$ be an adapted polynomial. There exists a real three dimensional polynomial vector-field $X$ and a regular orbit $\gamma(t)$ of $X$ such that:
\begin{itemize}
\item[i]) The $\omega$-limit of the orbit $\gamma(t)$ is an isolated singularity of $X$;
\item[ii]) The accumulation of secants set $Sec(\gamma(t))$ is equal to $\alpha(\Gamma)$.
\end{itemize}
\label{th:main}
\end{theorem}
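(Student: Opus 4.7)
The chart $\alpha$ is the secant map in a blow-up of the origin: a point of $\mathbb{R}^3$ written $(xt,yt,t)$ with $t>0$ has secant $\alpha(x,y)$. My plan is therefore to construct a polynomial field on $\mathbb{R}^3$ which, in the rational chart $x=X_1/X_3,\ y=X_2/X_3,\ t=X_3$, decouples into a planar dynamical system on the $(x,y)$-plane together with a strictly decreasing $t$-equation of the form $\dot t=-tg$; a planar orbit with $\omega$-limit $\Gamma$ will then lift to a three dimensional orbit with $\omega$-limit $\{0\}$ and accumulation of secants $\alpha(\Gamma)$.

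\textbf{Planar construction.} I would take
$$Y(x,y) \;=\; -h\,\nabla h \;+\; \nabla^{\bot}h \;=\; (-hh_x - h_y,\; -hh_y + h_x).$$
Since $\tfrac{d}{dt}h^{2} = -2 h^{2}|\nabla h|^{2}\le 0$, the function $h^{2}$ is a Lyapunov function, so the $\omega$-limit of an orbit that avoids the finite set $V(h,h_x,h_y)$ must lie in $V(h)$. The Hamiltonian summand $\nabla^{\bot}h$ is tangent to $V(h)$ and nonzero on its regular stratum, so $\Gamma$ is invariant and its induced flow traverses each regular arc in finite time. To confine the orbit to $B_1(0)$ without destroying the behaviour near $\Gamma$, I would multiply $Y$ by a sufficiently high power of $\rho=1-x^{2}-y^{2}$ and add an inward radial polynomial correction supported near $\partial B_1(0)$; the third adapted hypothesis $\langle\nabla^{\bot}h(p),p\rangle\neq 0$ on $V(h)\cap\partial B_1(0)$ guarantees that $\Gamma$ meets $\partial B_1(0)$ transversally, so such a correction can be chosen polynomial without creating new limit sets.

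\textbf{Lift to three dimensions.} Writing the resulting planar field as $(P,Q)$ of degree $d$ and choosing $g(x,y)=1+x^{2}+y^{2}$, I would set
$$X(X_1,X_2,X_3) \;=\; \bigl(X_3\,\tilde P - X_1\,\tilde g,\; X_3\,\tilde Q - X_2\,\tilde g,\; -X_3\,\tilde g\bigr),$$
where $\tilde P,\tilde Q,\tilde g$ are the degree-$d$ homogenizations in $X_3$ of $P,Q,g$. The field $X$ is polynomial, and after the time reparameterization by $X_3^{d}$ it reduces in the chart $(x,y,t)$ to $\dot x=P,\ \dot y=Q,\ \dot t=-t\,g$. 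Since $\tilde g=X_1^{2}+X_2^{2}+X_3^{2}>0$ off the origin, one checks directly that $(0,0,0)$ is the unique zero of $X$ in a neighborhood of the origin, hence an isolated singularity. Any orbit with $X_3(0)>0$ whose planar projection converges to $\Gamma$ therefore satisfies $X_3(t)\to 0^{+}$ and $(x(t),y(t))\to\Gamma$, so $\gamma(t)\to 0$, and the secant of $\gamma(t)$, which equals $\alpha(x(t),y(t))$, accumulates exactly on $\alpha(\Gamma)$.

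\textbf{Principal obstacle.} The main technical difficulty I expect is the proof that the planar $\omega$-limit equals the \emph{entire} connected set $\Gamma$ and not merely a proper closed invariant subset. This will require a Poincar\'e-return argument on short transversals to each regular arc of $\Gamma$, combined with a local analysis at each of the finitely many singular points of $\Gamma$ showing that the orbit crosses them rather than being trapped there; the connectedness of $\Gamma$ then forces the $\omega$-limit to be all of $\Gamma$. The remaining checks (isolatedness of the singularity of $X$, convergence $\gamma(t)\to 0$, and the identification $\mathrm{Sec}(\gamma)=\alpha(\Gamma)$) follow routinely from the explicit form of the lift and the strict positivity of $g$.
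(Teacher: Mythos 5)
Your construction has a fatal gap, not merely a technical one, and it sits exactly where you flag your ``principal obstacle.'' In your lift the $(x,y)$-dynamics in the blow-up chart are autonomous: $\dot x=P(x,y)$, $\dot y=Q(x,y)$ with no $t$-dependence. Hence the secant curve $\alpha(x(t),y(t))$ is the image of a genuine orbit of a planar polynomial vector field, and $Sec(\gamma)$ is forced to be $\alpha$ of the planar $\omega$-limit set of that orbit. By the Poincar\'e--Bendixson theorem that $\omega$-limit set must be an equilibrium, a periodic orbit, or a graphic bounding the cell in which the tail of the orbit eventually spirals. But the whole point of the theorem is that $\Gamma$ need not be of this type: in Example 1, $\Gamma$ is a grid of line segments that separates $B_1(0)$ into several cells, and no single planar orbit can accumulate on edges lying on the boundaries of different cells (the monotone-crossing/Jordan-curve argument traps the tail on one side of each transversal). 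So for the interesting adapted $h$ your planar field, and indeed \emph{any} decoupled lift, cannot produce $\omega$-limit equal to all of $\Gamma$. A secondary error compounds this: the singular points of $V(h)$ are zeros of $\nabla h$, hence equilibria of $Y=-h\nabla h+\nabla^{\perp}h$ (note $|Y|^{2}=(1+h^{2})|\nabla h|^{2}$), so the restricted flow does not ``cross'' them in finite time, and no local analysis can make an orbit pass through an equilibrium.

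The paper evades Poincar\'e--Bendixson by making the chart dynamics genuinely three-dimensional. It first replaces $\Gamma$ by a polynomial $H(x,y,z)$ whose level sets $L_{z}=V(H,z-z_0)$ are, for small $z_0>0$, \emph{smooth connected closed curves} converging to $\Gamma$ in the Hausdorff topology (Proposition \ref{prop:Bel}); the surface $V(H)\cap\{0<z<\epsilon\}$ is then a topological cylinder over $\Gamma$. The chart vector field $X_{\beta_1,\beta_2}(H)=\mathcal{H}(H)+z^{\beta_1}\mathcal{V}(H)+z^{\beta_2}\mathcal{P}(H)$ is tangent to this cylinder, winds around each smooth slice $L_z$ (never around $\Gamma$ itself), and drifts down in $z$ slowly enough --- via a \L{}ojasiewicz-inequality estimate controlling $\beta_1$ --- that the orbit circles the cylinder infinitely often and its $\omega$-limit is the full Hausdorff limit $\Gamma\times\{0\}$. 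The essential $z$-dependence of the horizontal motion is what your decoupled ansatz discards, and without it the statement is unprovable.
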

\begin{flushleft}
\textbf{Example 1:} \textit{A first non-trivial example can be given by the simple function:}
\end{flushleft}
$$
h(x,y) = (x^2-\frac{1}{4})(y^3-\frac{1}{4}y)
$$
\textit{Notice that the set $\Gamma = V(h) \cap B_1(0)$ is not in the list of possibilities given by the Poincaré-Bendixson Theorem (see figure \ref{fig:1}). Moreover, since $h$ is adapted, Theorem \ref{th:main} guarantees that $\alpha(\Gamma)$ is an accumulation of secants for some algebraic vector-field $X$.}\\
\\
\begin{figure}
  \centering
  \includegraphics[scale=0.4]{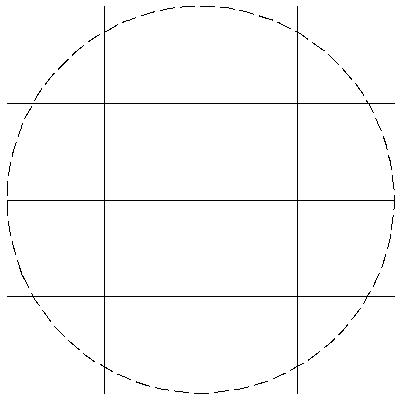}
  \caption{\small{The set $\Gamma = V(h) \cap B_1(0)$ of example 1.}}
  \label{fig:1}
 \end{figure}
The proof of Theorem \ref{th:main} is a direct consequence of Theorem $\ref{th:IS}$ and Proposition $\ref{prop:IS}$ below. Nevertheless, it is worth remarking that Propositions $\ref{prop:Bel}$ and $\ref{prop:glo}$ below are enough to prove property $[ii]$ of the Theorem.\\
\\
To conclude, we would like to remark that the ideas behind this work may lead to a stronger result. We believe that it should be possible to prove that any proper and connected semi-algebraic variety of $\mathbb{S}^2$ (maybe satisfying some generic condition) may be realized as an accumulation of secants set for some algebraic vector-field $X$.
\begin{remark}
In the rest of the manuscript we study the convergence of the orbit $\gamma(t)$ after the projective blowing-up of the origin:
$$
\tau : \mathbb{R} \times \mathbb{P}^2 \longrightarrow \mathbb{R}^3
$$
instead of the secants of $\gamma(t)$. We remark that this treatment leads to the desired result because $\mathbb{S}^2$ is a double cover of $\mathbb{P}^2$.
\end{remark}

\section*{Acknowledgments}
I would like to thank my thesis advisor, Professor Daniel Panazzolo, for the useful discussions, suggestions and revision of the manuscript. I would also like to thank Professor Felipe Cano for bringing the problem to my attention and Professor Patrick Speisseger for a very useful discussion. Finally, I would like to express my gratitude to the anonymous reviewer for all the suggestions and for indicating a shorter way of proving Proposition $\ref{prop:main}$ below. This work was supported by the Université de Haute-Alsace.
\section{The $\Gamma$-convergence}
Consider a real polynomial $H \in \mathbb{R}[x,y,z]$ and a compact and connected sub-set $\Gamma$ of $\mathbb{R}^2$. Let $L_{z_0}$ be the variety $V(H,z-z_0)$ for any constant $z_0$ in $\mathbb{R}$. We say that the polynomial $H$ is \textit{$\Gamma$-convergent} if the varieties $L_{z_0}$, with positive constants $z_0$, converge (in the Hausdorff topology) to $\Gamma$ when $z_0$ converges to zero, i.e. $L_{z_0} \stackrel{z_0\rightarrow 0^+}{\longrightarrow} \Gamma \times \{0\}$.
\begin{remark}
The fact that a polynomial $H$ is $\Gamma$-convergent does not imply that the variety $L_{0}$ is equal to $\Gamma$.
\end{remark}
\begin{flushleft}
Furthermore, we say that $H$ is:
\end{flushleft}
\begin{itemize}
\item \textit{connected} $\Gamma$-convergent if there exists a positive real number $\epsilon$ such that $L_{z}$ is a connected variety for $0<z<\epsilon$;
\item \textit{smooth} $\Gamma$-convergent if there exists a positive real number $\epsilon$ such that $L_{z}$ is a smooth variety for $0<z<\epsilon$.
\end{itemize}
\begin{remark}
If $H$ is connected $\Gamma$-convergent then, for a small enough positive real number $\epsilon$, the semi-analytic variety 
$V(H) \cap \{0<z<\epsilon\}$ is homeomorphic to a topological cylinder.
\end{remark}
If $\Gamma$ is equal to a semi-analytic variety $V(h) \cap \{x^2+y^2 \leq 1\}$, where $h$ is an adapted polynomial (see definition in the introduction), then Propositions $[2.1-2.2]$ of $\cite{Bel}$ guarantees the existence of a polynomial $H$ that is smooth connected $\Gamma$-convergent. More precisely, in $\cite{Bel}$ it is consider the family of polynomials:
$$
H_{\alpha}(x,y,z) = h(x,y)^2 + g(x,y,z)  + \alpha z^4
$$
where the parameter $\alpha$ takes values in the interval $[0,1]$ and the polynomial $g(x,y,z)$ has the form $z(\widetilde{g}(x,y,z) + \sum^N_{i=1} \binom{N}{i} x^{2i}y^{2(N-i)} )$ for some $N \in \mathbb{N}$, where $\widetilde{g}(x,y,z)$ is a polynomial with degree strictly smaller than $2N$. We remark that the explicit expression of the polynomial $g(x,y,z)$ can be found in $\cite{Bel}$, but is not important for this work. Finally, Propositions $[2.1-2.2]$ of $\cite{Bel}$ are sufficient to prove the following:
\begin{proposition}
If $\Gamma$ is equal to a semi-analytic variety $V(h) \cap \{x^2+y^2 \leq 1\}$, where $h$ is an adapted polynomial, then there exists a polynomial $H$ smooth connected $\Gamma$-convergent. Furthermore, such a polynomial can be chosen as the polynomials $H_{\alpha}(x,y,z)$ for almost all $\alpha$ fixed.
\label{prop:Bel}
\end{proposition}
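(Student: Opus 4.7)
The plan is to apply Propositions $2.1$ and $2.2$ of \cite{Bel} directly to the family $H_\alpha(x,y,z) = h(x,y)^2 + g(x,y,z) + \alpha z^4$, and to verify that the three properties in the definition of \emph{smooth connected} $\Gamma$-convergence are recovered: Hausdorff convergence of $L_{z_0}$ to $\Gamma \times \{0\}$ as $z_0 \to 0^+$, smoothness of $L_{z_0}$ for all small $z_0 > 0$, and connectedness of $L_{z_0}$ for all small $z_0 > 0$.

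For the Hausdorff convergence I would exploit the structural form of $g$. Its leading part contains the coercive summand $z \cdot \sum_{i=1}^N \binom{N}{i} x^{2i} y^{2(N-i)}$, whose value is comparable to $z (x^2+y^2)^N$ for $x^2+y^2$ large. Hence for small $z_0 > 0$ any point of $L_{z_0}$ must simultaneously satisfy $h(x,y)^2 \approx 0$ (the remaining terms being forced small) and $x^2 + y^2$ bounded (from the coercive balance). A standard compactness argument then yields $L_{z_0} \to V(h) \cap B_1(0) = \Gamma$ in the Hausdorff topology, uniformly in $\alpha \in [0,1]$. Smoothness for almost every $\alpha$ is a direct application of Sard's theorem to the map $(x, y, z, \alpha) \mapsto H_\alpha(x,y,z)$: the set of pairs $(\alpha, z_0)$ for which $0$ is a critical value of the corresponding slice has measure zero, so by Fubini almost every $\alpha$ admits an $\epsilon(\alpha) > 0$ making $L_{z_0}$ smooth for $0 < z_0 < \epsilon(\alpha)$.

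The delicate step, and the main obstacle, is connectedness. The adapted polynomial $h$ may have singular crossing points (the finite set $V(h, h_x, h_y)$) where several branches of $V(h)$ meet, and near such a point the perturbed level set $L_{z_0}$ could a priori split into several components. The role of the specific perturbation $g + \alpha z^4$, together with the generic choice of $\alpha$, is precisely to resolve each singular crossing in a coherent way so that the global connectedness of $\Gamma$ passes to $L_{z_0}$. Locally, I would exhibit a normal form around each crossing point to identify the smooth deformation of the branches for generic $\alpha$; at the boundary, the transversality condition $\langle \nabla^{\bot} h(p), p \rangle \neq 0$ ensures that $L_{z_0}$ does not pinch off near $\partial B_1(0)$. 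Once these local analyses are assembled, the conclusions of \cite{Bel} apply and provide the required $H_\alpha$ for almost all $\alpha \in [0,1]$, completing the proof.
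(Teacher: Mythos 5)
Your proposal takes essentially the same route as the paper: the paper offers no independent argument for Proposition \ref{prop:Bel} beyond invoking Propositions $[2.1$--$2.2]$ of \cite{Bel} applied to the family $H_{\alpha}$, which is exactly what you do (your added sketches of the coercivity, Sard, and connectedness mechanisms are plausible glosses on what \cite{Bel} establishes, not a different proof).
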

In what follows we work with \textit{any} polynomial $H(x,y,z)$ that is smooth connected $\Gamma$-convergent.
%
%
%
\section{Construction in a chart}
We say that a real three dimension vector-field $X$ is \textit{$\Gamma$-convergent} if the variety $\{z=0\}$ is invariant by $X$, i.e. $X$ is tangent to $\{z=0\}$, and there exists an orbit $\gamma(t)$ of $X$ contained in $\{z>0\}$ such that the $\omega$-limit of $\gamma(t)$ is equal to $\Gamma \times \{0\}$.\\
\\
In this section, we prove that the existence of a polynomial smooth connected $\Gamma$-convergent implies the existence of a vector-field $\Gamma$-convergent. More precisely, let $H$ be smooth connected $\Gamma$-convergent and consider:
\begin{itemize}
\item The $H$-\textit{horizontal} vector-field $\mathcal{H}(H) = -H_y \frac{\partial}{\partial x} +  H_x \frac{\partial}{\partial y}$;
\item The $H$-\textit{vertical} vector-field $\mathcal{V}(H) = \mathcal{V}(H) : = H_x H_z \frac{\partial}{\partial x} +  H_y H_z \frac{\partial}{\partial y} -(H_x^2 + H^2_y) \frac{\partial}{\partial z}$;
\item The $H$-\textit{perturbed} vector-field $\mathcal{P}(H) : = H \frac{\partial}{\partial z}$.
\end{itemize}
Notice that, for a small enough $\epsilon>0$:
\begin{itemize}
\item The $H$-horizontal vector-field $\mathcal{H}(H)$ is the Hamiltonian of the function $H$ in respect to the variables $(x,y)$;
\item The $H$-horizontal vector-field $\mathcal{H}(H)$ and the $H$-vertical vector-field $\mathcal{V}(H)$ forms a base of $T_pV(H)$ for all points $p$ in the intersection of $V(H)$ and $\{0 <z<\epsilon\}$.
\item The $H$-\textit{perturbed} vector-field $\mathcal{P}(H)$ is identically zero in $V(H)$.
\end{itemize}
Now, consider the polynomial family of vector-fields:
$$
X_{\beta_1,\beta_2}(H) = \mathcal{H}(H) + z^{\beta_1}\mathcal{V}(H) + z^{\beta_2}\mathcal{P}(H)
$$
for constants $\beta_1$ and $\beta_2$ in $\mathbb{N} \setminus \{0\}$. The main result of this section can now be formulated precisely:
\begin{proposition}
Suppose that there exists a polynomial $H$ smooth connected $\Gamma$-convergent. Then, for any big enough $\beta_1$ and any positive $\beta_2$, the vector-field $X_{\beta_1,\beta_2}(H)$ constructed above is $\Gamma$-convergent.
\label{prop:main}
\end{proposition}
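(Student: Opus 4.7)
The strategy is to start the orbit on the invariant surface $V(H)$ at a small positive height, show that it descends monotonically to $\{z=0\}$ while winding ever more times around the topological cylinder $V(H)\cap\{0<z<\epsilon\}$, and identify its $\omega$-limit with $\Gamma\times\{0\}$ using the Hausdorff convergence $L_z\to \Gamma$. A direct computation gives $\mathcal{H}(H)(H)=0$, $\mathcal{V}(H)(H)=0$, and $\mathcal{P}(H)\equiv 0$ on $V(H)$, so $V(H)$ is invariant under $X_{\beta_1,\beta_2}(H)$; moreover $\dot{z}|_{z=0}=0$, so $\{z=0\}$ is invariant as well. Fix $0<z_0<\epsilon$ and a base point $p_0\in L_{z_0}$, and let $\gamma(t)$ denote the orbit through $p_0$.

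Along $\gamma$ the relation $H\equiv 0$ makes the $\mathcal{P}(H)$ contribution vanish and leaves
$$
\dot{z} = -z^{\beta_1}(H_x^2+H_y^2).
$$
Smoothness of each $L_z$ for $0<z<\epsilon$ implies $(H_x,H_y)\neq 0$ on $L_z$, and the Hausdorff convergence $L_z\to\Gamma\times\{0\}$ makes $V(H)\cap\{z_*\leq z\leq z_0\}$ compact for every $z_*\in(0,z_0)$, providing a uniform lower bound $H_x^2+H_y^2\geq c(z_*)>0$ on that set. A standard comparison argument then shows that $\gamma$ is defined for all $t\geq 0$ and that $z(t)$ is strictly decreasing with $z(t)\to 0^+$. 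This immediately gives $\omega(\gamma)\subseteq\Gamma\times\{0\}$: any accumulation point has $z$-coordinate $0$ and is a Hausdorff limit of points of $L_{z(t_n)}\subset V(H)\cap\{z=z(t_n)\}$, hence lies in $\Gamma\times\{0\}$.

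The heart of the proof, which I expect to be the principal obstacle, is the reverse inclusion $\Gamma\times\{0\}\subseteq\omega(\gamma)$. The horizontal field $\mathcal{H}(H)$ is tangent to $V(H)$ and nowhere zero on each smooth closed curve $L_z$, so it induces a periodic flow of finite period $T(z)$ on $L_z$. Because $z^{\beta_1}\mathcal{V}(H)$ is of order $z^{\beta_1}$ along $\gamma$, the ratio $|\dot{z}|/|\dot\phi|$, where $\phi$ denotes an angular coordinate on the cylinder $V(H)\cap\{0<z<\epsilon\}$, is $O(z^{\beta_1})$; choosing $\beta_1$ large enough forces $\int_0^\infty |\dot\phi|\,dt=\infty$ in spite of $z(t)\to 0$, i.e.\ the projection of $\gamma$ winds infinitely many times around the cylinder. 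To make this rigorous I would set up a Poincar\'e section $\Sigma$ transverse to $\mathcal{H}(H)$ in $V(H)\cap\{z>0\}$, bound the first-return map's $z$-component by a contraction of the form $1-O(z^{\beta_1})$, and extract a strictly decreasing sequence of returns $z_n\to 0$. A continuous parameterization of the family $\{L_z\}$ compatible with the Hausdorff limit then converts the density of returns on each $L_{z_n}$ into density of the planar projection $\{(x(t),y(t)):t\geq 0\}$ in $\Gamma$, yielding $\omega(\gamma)=\Gamma\times\{0\}$ and hence the $\Gamma$-convergence of $X_{\beta_1,\beta_2}(H)$.
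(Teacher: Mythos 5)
Your overall architecture matches the paper's: establish invariance of $V(H)$ and $\{z=0\}$, use the monotone decrease of $z$ along the orbit to get $\omega(\gamma)\subseteq\Gamma\times\{0\}$, and then prove the reverse inclusion by showing the orbit winds infinitely around the cylinder $V(H)\cap\{0<z<\epsilon\}$. The first two steps are fine. But the step you yourself flag as ``the heart of the proof'' contains a genuine gap, and it is exactly where the paper has to do real work.

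The unjustified claim is that the ratio $|\dot z|/|\dot\phi|$ is $O(z^{\beta_1})$ with a constant that is uniform over the cylinder, so that taking $\beta_1$ large forces $\int|\dot\phi|\,dt=\infty$. In the coordinates $\pi(\theta,r)$ used by the paper one has $\dot\theta=A+r^{\beta_1}B$ and $\dot r=-r^{\beta_1}C$, and the winding condition amounts to $\frac{A}{C\,r^{\beta_1-1}}+\frac{rB}{C}>1$. The difficulty is that $A$ (the angular speed coming from $\mathcal{H}(H)=-H_y\partial_x+H_x\partial_y$) and $C$ (essentially $H_x^2+H_y^2$ in the new coordinates) both degenerate to zero as $r\to 0$ near the points of the cylinder lying over the singular points of $\Gamma$: an adapted $h$ is allowed to have a nonempty finite set $V(h,h_x,h_y)$, so $\nabla H$ restricted to $L_z$ is \emph{not} bounded below uniformly in $z$. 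Hence there is no uniform positive lower bound on $\dot\phi$ and no uniform $O(z^{\beta_1})$ estimate; your comparison argument, as written, only works when $\Gamma$ is a smooth closed curve. The paper closes this gap with the Lojasiewicz inequality for semi-analytic sets (its Proposition \ref{prop:LI}), producing exponents $a,c_1,c_2$ with $A>Mr^{a}$, $C>Mr^{c_1}$, $1/C>Mr^{c_2}$, plus a separate splitting $K=\{B>-1\}\cup\{B\leq -1\}$ to control the possibly negative term $rB/C$, and only then is ``$\beta_1$ big enough'' a meaningful choice, namely $\beta_1$ large relative to $a+c_2+c_1+b$. Your proposal never produces such exponents, so the quantifier ``for $\beta_1$ big enough'' has nothing concrete to beat. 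To repair the argument you would need to insert a Lojasiewicz-type lower bound (or an equivalent desingularization/monomialization argument) before the winding estimate; the rest of your outline then goes through essentially as in the paper.
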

\begin{proof}
Fix a constant $\beta_2>0$ and let $X_{\beta_1}$ denote the vector-field $X_{\beta_1,\beta_2}(H)$ with the fixed $\beta_2$. By construction, it is clear that the vector-field $X_{\beta_1}$ is tangent to the variety $V(H)$ and to the plane $\{z=0\}$, which implies that the semi-algebraic set $V(H) \cap \{z \geq 0\}$ is invariant by the flux of $X_{\beta_1}$.\\
\\
We claim that for $\beta_1$ big enough, there exists $\epsilon>0$ such that all orbits of $X_{\beta_1}$ passing by $V(H) \cap \{0<z<\epsilon\}$ have $\omega$-limit equal to $\Gamma \times \{0\}$. Indeed, consider $\epsilon> 0$ sufficiently small, so that we can define the analytic diffeomorphism:
$$
\pi: \mathbb{S}^1 \times \{ 0  < r < \epsilon\} \to V(H) \cap \{0  < z < \epsilon\}
$$
where $\pi(\theta,r) = (x(\theta,r),y(\theta,r),r)$. We remark that one can prove the existence of such a map using the flux of the horizontal vector-field $\mathcal{H}(H)$.

\begin{remark}
 [This remark is supposed to be removed]. Since the horizontal vector-field $Y=\mathcal{H}(H)$ is analytic (even algebraic), the regular orbits of this vector-field are analytic functions and the dependence of the initial condition is also analytic. So, let $\Sigma$ be an analytic transverse section of $Y$ which is contained in $V(H) \cap \{0  < z < \epsilon\}$. Then the return map is well-defined and the function $T : \Sigma \to \mathbf{R}$ which gives the \textbf{time} of return is an analytic function which is always positive. We will use the analytic vector field $\frac{Y}{T}$, where we abuse notation to define $T$ over all $V(H) \cap \{0  < z < \epsilon\}$ by letting it be constant over the orbits of $Y$. So, we can consider the orbits $\phi: \mathcal{R} \times \Sigma \to  V(H) \cap \{0  < z < \epsilon\}$ where $\phi(t,p)$ is the orbit of the vector-field $\frac{Y}{T}$ with initial condition $p$ and time $t$. Since $\Sigma$ is clearly biholomorphic with $(0,\epsilon)$, this gives rise to the necessary analytic map.\\
\\
Furthermore, I believe the map can not be algebraic because we are using polar coordinates.
\end{remark}
We work with the pull-back of $X_{\beta_1}$ under this map:
\begin{align*}
\pi^{\ast}\mathcal{H}(H) & = A(\theta,r) \frac{\partial}{\partial \theta}\\
\pi^{\ast}\mathcal{V}(H) & = B(\theta,r) \frac{\partial}{\partial \theta} - C(\theta,r) \frac{\partial}{\partial r} \\
\pi^{\ast}X_{\beta_1} & = [A(\theta,r) + r^{\beta_1} B(\theta,r) ] \frac{\partial}{\partial \theta}  -  r^{\beta_1} C(\theta,r) \frac{\partial}{\partial r}
\end{align*}
where the functions $A$, $B$ and $C$ are analytic functions over the cylinder $\mathbb{S}^1 \times \{0  < r < \epsilon\}$.\\
\\
Now, we claim that the function $A$ and $C$ are strictly positive functions. Indeed, apart from taking a smaller $\epsilon$, we notice that the vector-field  $\mathcal{H}(H)$ has no singularities since $H$ is $\Gamma$-convergent (thus, the intersection of the variety $V(H,H_x,H_y)$ with the set $\{0 <z<\epsilon\}$ is empty). This implies that $A$ is of a fixed sign for $r>0$ and, apart from changing the sign of $\theta$, we can assume that it is positive.\\
\\
Furthermore, notice that the function $L:=z$ is a strict negative Lyapunov function of $\mathcal{V}(H)$ over the semi-analytic set $V(H) \cap \{0<z<\epsilon\}$. Indeed, for a point $p$ in $V(H) \cap \{0<z<\epsilon\}$:
$$
[\mathcal{V}(H)](L)=[-z^{\beta_1}(H_x^2 + H^2_y)] < 0
$$
This implies that $-C = \pi*[\mathcal{V}(H)](L) < 0$ and $C$ is strictly positive function. We remark that this already implies that all orbits of $X_{\beta_1}$ passing by  $V(H) \cap \{0<z<\epsilon\}$ have $\omega$-limit \textit{contained} in $\Gamma \times \{0\}$\\
\\
To be able to conclude, we just need to show that $  \frac{r d \theta}{ d r}> 1$ for all $\theta$ in $\mathbb{S}^1$ and all $r$ sufficiently small. This would clearly imply that the vector-field $\pi^{\ast}X_{\beta_1}$ is spiraling, i.e its orbits have the hole $\mathbb{S}^1 \times \{0\}$ as a $\omega$-limit. Finally, this would imply that all orbits of $X_{\beta_1}$ passing by $V(H) \cap \{0<z<\epsilon\}$ have $\omega$-limit equal to $\Gamma \times \{0\}$.\\
\\
So, we just have to prove that the following inequality holds:
\begin{equation}
\label{eq:2}
\frac{A}{C r^{\beta_1-1}} + \frac{r B}{C} > 1
\end{equation}
To be able to do so, we use Lojasiewicz's inequality for semi-analytic sets (see Theorem 6.4 of \cite{BM}). We actually use a slightly stronger result which can be easily deduced from the proof given in Theorem 6.4 of \cite{BM} (for the excat same remark for quasi-algebraic category, see Proposition $2.11$ of \cite{Co} and remark (a) below the same Proposition):
\begin{proposition}
[] Let $K$ be a semi-nalytic set of $\mathbb{R}^n$, $f$ and $g$ two continuous and semi-analytic functions over $K$ such that:
\begin{itemize}
 \item[i]) The set $\{ x \in K ; \|g(x)\| > \delta \}$ is compact for all $\delta>0$;
 \item[ii]) The zeros of $f$ are also zeros of $g$.
\end{itemize}
Then, there exists strictly positive constants $M$ and $\alpha$ such that $\|f(x)\| \geq M \|g(x)\|^{\alpha}$ for all $x$ in $K$. 
\label{prop:LI}
\end{proposition}
The idea is to apply this result for the semi-algebraic set $K = \mathbb{S}^1 \times \{0  < r < \epsilon\}$ and for the function $g =r$. We notice that these choices of $K$ and $g$ clearly satisfies hypothesis $[i]$ of Proposition \ref{prop:LI}. So, we can directly apply Proposition \ref{prop:LI} to the functions $A$, $C$ and $\frac{1}{C}$ in order to obtain positive constants $M$, $a$, $c_1$ and $c_2$ such that: $A > M r^{a}$; $C > M r^{c_1}$ and $\frac{1}{C} > M r^{c_2}$.\\
\\
We notice that one can not apply Proposition \ref{prop:LI} directly to $B$ since the function $B$ may have zeros on $K$. Nevertheless, since $B$ is analytic on $K$, let us consider the two semi-algebraic sets $K_1 = K \cap \{ B > -1 \}$ and $K_2 = K \cap \{ B \leq -1 \}$, where $K_2$ is relatively compact. In this situation, we can apply Proposition \ref{prop:LI} to the function $\frac{1}{B}$ over $K_2$ in order to obtain a positive constant $b$ such that, without loss of generality: $- \frac{1}{B}> M r^b$, which implies that $B> - M r^{-b}$.\\
\\
So, for a point $p$ in $K_1$, we obtain:
\begin{align*}
\frac{A}{C r^{\beta_1-1}} + \frac{r B}{C} > A r^{1-\beta_1} \frac{1}{C}  - r \frac{ 1}{C} > M r^a r^{1-\beta_1} M r^{c_2} - r M^{-1} r^{-c_1} =\\
= M^2 r^{1+a+c_2 - \beta_1} - M^{-1} r^{1-c_1}
\end{align*}
and for $p$ in $K_2$, we obtain:
\begin{align*}
\frac{A}{C r^{\beta_1-1}} + \frac{r B}{C} > A r^{1-\beta_1} \frac{1}{C}  - M r^{-b} \frac{ 1}{C} > 
M r^a r^{1-\beta_1} M r^{c_2} - M r^{-b} M^{-1} r^{-c_1} =\\
= M^2 r^{1+a+c_2 - \beta_1} - r^{1-c_1-b}
\end{align*}
So, it is clear that for $r$ sufficiently small and $\beta_1$ sufficiently big, the inequality $(\ref{eq:2})$ is satisfied for all points in $K = K_1 \cup K_2$, as we wanted to prove.
\end{proof}
\section{Globalization}
In this section, we prove that the existence of a polynomial $H$ smooth connected $\Gamma$-convergent implies the existence of a vector-field with an orbit $\gamma(t)$ such that its accumulation of secants set $Sec(\gamma(t))$ is $\alpha(\Gamma)$ (see the definition of the morphism $\alpha: \mathbb{R}^2 \longrightarrow \mathbb{S}^2$ in the introduction). More precisely, let $H$ be a smooth connected $\Gamma$-convergent and consider the subset of polynomials:
$$
\mathcal{O}^{'}_{\R^3} = \{ f \in \mathcal{O}_{\R^3}; \text{ } f(0,0,0)=0 \text{ and } f(x,y,0) \not\equiv 0\}
$$
and the function:
$$
\begin{array}{cccc}
\sigma: & \mathbb{R}^3 & \longrightarrow & \mathbb{R}^{3} \\
 & (x,y,z) & \mapsto & (xz,yz,z)
\end{array}
$$
which is also equal to a blowing-up restricted to one of its charts. Then, there exists two unique functions:
$$
\begin{array}{ccc}
\phi: \mathcal{O}^{'}_{\R^3} \longrightarrow \mathbb{N}^{\ast} & \text{  ,  } & \psi: \mathcal{O}_{\R^3} \longrightarrow \mathcal{O}^{'}_{\R^3}
\end{array}
$$
such that:
$$
z^{\phi(f)}f = \psi(f) \circ \sigma
$$
Now, fix constants:
\begin{itemize}
\item $\alpha = max\{\phi(H_y),\phi(H_x) \}$;
\item $\beta_1 \geq 2 [\phi(H_x)+\phi(H_y)]+\phi(H_z) - \alpha + 2$ (in particular $\beta_1 \geq 1$);
\item $\beta_2 = \phi(H)- \alpha +1$ (in particular $\beta_2 \geq 1$).
\end{itemize}
Notice that $\beta_1$ can be chosen as big as necessary. We define the vector-field:
$$
Y = A \frac{\partial}{\partial x} + B\frac{\partial}{\partial y} + C( x\frac{\partial}{\partial x} + y\frac{\partial}{\partial y} + z\frac{\partial}{\partial z})
$$
where:
\begin{itemize}
\item $A= - z^{a_1} \psi(H_y) + z^{a_2} \psi(H_x)\psi(H_z)$ where:
$$
\begin{array}{c}
a_1 = \alpha+1-\phi(H_y) \geq 1\\
a_2 = \alpha+\beta_1 + 1 - \phi(H_x) - \phi(H_z) \geq 1
\end{array}
$$
\item $B=  z^{b_1} \psi(H_x) + z^{b_2} \psi(H_y)\psi(H_z)$ where:
$$
\begin{array}{c}
b_1 = \alpha+1-\phi(H_x) \geq 1\\
b_2 = \alpha+\beta_1 + 1 - \phi(H_y) - \phi(H_z) \geq 1
\end{array}
$$
\item $C= - z^{c_1} \psi(H_x)^2  - z^{c_2} \psi(H_y)^2 +\psi(H)$ where:
$$
\begin{array}{c}
c_1 = \alpha+ \beta_1 -1 -2 \phi(H_x) \geq 1\\
c_2 = \alpha+ \beta_1 -1 -2 \phi(H_y) \geq 1
\end{array}
$$
\end{itemize}
The main result of this section can now be formulated precisely:
\begin{theorem}
Suppose that there exists a polynomial $H$ smooth connected $\Gamma$-convergent. Then, for $\beta_1$ sufficiently big, the vector-field $Y$ constructed above 
has a regular orbit $\gamma(t)$ such that the accumulation of secants $Sec(\gamma(t))$ is equal to $\alpha(\Gamma)$ (see the definition of the morphism $\alpha: \mathbb{R}^2 \longrightarrow \mathbb{S}^2$ in the introduction).
\label{prop:glo}
\end{theorem}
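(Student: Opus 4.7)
The plan is to recognise $Y$ as, up to a strictly positive scalar factor on $\{z>0\}$, the $\sigma$-pullback of the vector-field $X_{\beta_1,\beta_2}(H)$ constructed in the previous section. Granted this identification, Proposition \ref{prop:main} directly supplies an ``upstairs'' orbit whose $\omega$-limit is $\Gamma \times \{0\}$, and projecting it down through $\sigma$ produces an orbit of $Y$ whose accumulation of secants is exactly $\alpha(\Gamma)$.

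\emph{Step 1 (Key identity).} The heart of the proof is the equality
\begin{equation*}
\sigma^{\ast} Y \;=\; z^{\alpha}\, X_{\beta_1,\beta_2}(H) \;=\; z^{\alpha}\bigl[\,\mathcal{H}(H)\;+\;z^{\beta_1}\mathcal{V}(H)\;+\;z^{\beta_2}\mathcal{P}(H)\,\bigr].
\end{equation*}
To prove it, I use the fact that $\sigma$ is a local diffeomorphism on $\{z\neq 0\}$, with inverse $(u,v,w)\mapsto (u/w,v/w,w)$, to compute $\sigma^{\ast}Y$ explicitly in source coordinates via the inverse of the Jacobian of $\sigma$. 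I then replace every occurrence of $\psi(f)(xz,yz,z)$ by $z^{\phi(f)}f(x,y,z)$ using the defining property of $\psi$ and $\phi$. A monomial-by-monomial inspection then shows that the exponents $a_1,a_2,b_1,b_2,c_1,c_2$ have been rigged precisely so that each component of $\sigma^{\ast}Y$ factors a common $z^{\alpha}$ and reproduces, term by term, the horizontal, vertical and perturbation pieces of $X_{\beta_1,\beta_2}(H)$. This bookkeeping of exponents is the only delicate part of the argument.

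\emph{Step 2 (Good orbit upstairs).} Since $H$ is smooth connected $\Gamma$-convergent, Proposition \ref{prop:main} produces, for $\beta_1$ large enough, a regular orbit $\widetilde\gamma(s) = (\widetilde x(s),\widetilde y(s),\widetilde z(s))$ of $X_{\beta_1,\beta_2}(H)$ lying in $V(H)\cap\{z>0\}$ whose $\omega$-limit equals $\Gamma\times\{0\}$. Because $z^{\alpha}>0$ on $\{z>0\}$, the identity of Step $1$ shows that $\widetilde\gamma$ is, up to a strictly positive time reparametrisation, also a regular orbit of $\sigma^{\ast}Y$, with the same image and the same $\omega$-limit set.

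\emph{Step 3 (Descent and computation of $Sec(\gamma)$).} Define $\gamma(s):=\sigma(\widetilde\gamma(s))$. Since $\sigma$ is a diffeomorphism on $\{z>0\}$, this is a regular orbit of $Y$. Explicitly, $\gamma(s)=(\widetilde x\,\widetilde z,\,\widetilde y\,\widetilde z,\,\widetilde z)(s)$; boundedness of $(\widetilde x,\widetilde y)$ on the compact set $\Gamma$ together with $\widetilde z\to 0^{+}$ gives $\gamma(s)\to(0,0,0)$, and
\begin{equation*}
\frac{\gamma(s)}{\|\gamma(s)\|} \;=\; \frac{(\widetilde x(s),\widetilde y(s),1)}{\sqrt{\widetilde x(s)^{2}+\widetilde y(s)^{2}+1}} \;=\; \alpha\bigl(\widetilde x(s),\widetilde y(s)\bigr).
\end{equation*}
By continuity of $\alpha$, the accumulation of secants $Sec(\gamma)$ equals $\alpha$ applied to the $\omega$-limit of the $(\widetilde x,\widetilde y)$-projection of $\widetilde\gamma$, which is precisely $\Gamma$. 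Hence $Sec(\gamma)=\alpha(\Gamma)$, as required.
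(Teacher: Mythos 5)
Your proposal is correct and follows essentially the same route as the paper: the key identity of Step 1 is exactly the paper's computation that the strict transform of $Y$ in the $z$-chart of the blow-up equals $X_{\beta_1,\beta_2}(H)$ (i.e.\ $\sigma^{\ast}Y = z^{\alpha}X_{\beta_1,\beta_2}(H)$), after which Proposition \ref{prop:main} is invoked. Your Steps 2--3 merely spell out the descent and the secant computation that the paper compresses into ``the thesis clearly follows,'' which is a welcome addition but not a different argument.
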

\begin{proof}
Consider the blowing-up of $\mathbb{R}^3$ with the origin as center:
$$
\tau : \mathbb{R} \times \mathbb{P}^2 \longrightarrow \mathbb{R}^3
$$
and let $Y^{'}$ be the strict transform of the vector-field $Y$. Then, in the $z$-chart, the strict transform $Y^{'}$ is equal to $X_{\beta_1,\beta_2}(H)$. Indeed:
$$
Y^{'} = z^{-\alpha} Y^{\ast} = z^{-(\alpha+1)} A^{\ast} \frac{\partial}{\partial x} + z^{-(\alpha+1)} B^{\ast} \frac{\partial}{\partial y} + z^{-\alpha}C^{\ast}( z \frac{\partial}{\partial z})
$$
where:
\begin{itemize}
\item $z^{-(\alpha+1)} A^{\ast} = - z^{a^{\ast}_1} H_y + z^{a^{\ast}_2 } H_x H_z$ where:
$$
\begin{array}{c} 
a^{\ast}_1 = \alpha+1-\phi(H_y) + \phi(H_y) - \alpha - 1 = 0\\
a^{\ast}_2 = \alpha+\beta_1 + 1 - \phi(H_x) - \phi(H_z) + \phi(H_x)+\phi(H_z)-\alpha-1 = \beta_1
\end{array}
$$
\item $z^{-(\alpha+1)} B^{\ast} = - z^{b^{\ast}_1} H_y + z^{b^{\ast}_2 } H_x H_z$ where:
$$
\begin{array}{c} 
b^{\ast}_1 = \alpha+1-\phi(H_x) + \phi(H_x) - \alpha - 1 = 0\\
b^{\ast}_2 = \alpha+\beta_1 + 1 - \phi(H_y) - \phi(H_z) + \phi(H_y)+\phi(H_z)-\alpha-1 = \beta_1
\end{array}
$$
\item $z^{-\alpha} C^{\ast} = - z^{c^{\ast}_1} H_x^2  - z^{c^{\ast}_2} H_y^2 +z^{c^{\ast}_3} H $ where: 
$$
\begin{array}{c}
c^{\ast}_1 = \alpha+ \beta_1 -1 -2 \phi(H_x) + 2 \phi(H_x) - \alpha = \beta_1-1\\
c^{\ast}_2 = \alpha+ \beta_1 -1 -2 \phi(H_y) + 2 \phi(H_y) - \alpha = \beta_1-1\\
c^{\ast}_3 = \phi(H)-\alpha = \beta_2-1
\end{array}
$$  
\end{itemize}
Since, by Proposition $\ref{prop:main}$, the vector-field $X_{\beta_1,\beta_2}(H)$ is $\Gamma$-convergent for $\beta_1$ big enough, the thesis clearly follows.
\end{proof}
\section{Isolated singularity}
\label{subsec:IS}
In this subsection, we prove that the vector-field $Y$ of Theorem $\ref{prop:glo}$ can be chosen so that the origin is an isolated singularity. For this end, we need to strengthen the hypotheses over the polynomial $H$ that is $\Gamma$-convergent.\\
\\
We say that a polynomial $H$ is \textit{isolated} $\Gamma$-convergent if:
\begin{itemize}
\item[i]) There exists a positive real number $\epsilon$ such that $L_{z}$ is a smooth variety for $-\epsilon<z<0$;
\item[ii]) The origin of $\mathbb{R}^2$ is an isolated solution of the equation in two variables $\{\psi(H)(x,y,0)=0\}$.
\end{itemize}
With this definition, we can prove the following result:
\begin{theorem}
Suppose that there exists a polynomial $H$ isolated smooth connected $\Gamma$-convergent. Then, the origin is an isolated singularity of the vector-field $Y$ constructed in Theorem $\ref{prop:glo}$.
\label{th:IS}
\end{theorem}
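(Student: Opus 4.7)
The plan is to show that $Y$ admits a punctured neighborhood of the origin on which it is zero-free, splitting the analysis according to whether the third coordinate vanishes or not. First, on the plane $\{z=0\}$, every term of $A$ and $B$ carries a positive power of $z$ (the exponents $a_1,a_2,b_1,b_2$ are all $\geq 1$), so $A(x,y,0)=B(x,y,0)=0$; likewise the $c_1,c_2$ terms of $C$ vanish, leaving $C(x,y,0)=\psi(H)(x,y,0)$. Consequently
\[
Y(x,y,0)=\psi(H)(x,y,0)\bigl(x\,\partial_x+y\,\partial_y\bigr),
\]
which is zero exactly when either $(x,y)=(0,0)$ or $\psi(H)(x,y,0)=0$. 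Clause (ii) of the isolated $\Gamma$-convergent property says the origin is an isolated zero of $\psi(H)(\,\cdot\,,\,\cdot\,,0)$ in $\R^2$, so in a small enough disk the second alternative collapses into the first.

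Next, for points $p=(x_0,y_0,z_0)\in\R^3$ with $z_0\neq 0$, I would pass to the $z$-chart of the blow-up $\tau$. The calculation in the proof of Theorem~\ref{prop:glo} identifies the strict transform of $Y$ in that chart with $z^{\alpha} X_{\beta_1,\beta_2}(H)$, and since $\sigma$ is a local diffeomorphism off $\{z=0\}$, the zeros of $Y$ with $z_0\neq 0$ correspond bijectively to the zeros of $X_{\beta_1,\beta_2}(H)$ in $\{z\neq 0\}$ (the correspondence being $p\mapsto(x_0/z_0,y_0/z_0,z_0)$). Writing out the three components of $X_{\beta_1,\beta_2}(H)=\mathcal{H}(H)+z^{\beta_1}\mathcal{V}(H)+z^{\beta_2}\mathcal{P}(H)$, the first two form a linear system in $(H_x,H_y)$ with matrix of determinant $1+z^{2\beta_1}H_z^2>0$, forcing $H_x=H_y=0$; the third equation then collapses to $z^{\beta_2}H=0$, giving $H=0$ since $z\neq 0$. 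So any such lifted zero lies in $V(H,H_x,H_y)\cap\{z\neq 0\}$. The \textit{smooth} part of the $\Gamma$-convergent hypothesis excludes this set in $\{0<z<\epsilon\}$, while clause (i) of \textit{isolated} extends the exclusion to $\{-\epsilon<z<0\}$; together, $V(H,H_x,H_y)\cap\{0<|z|<\epsilon\}=\emptyset$, so $Y$ has no zeros with $0<|z_0|<\epsilon$.

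Combining both steps, for $\delta>0$ small enough for clause (ii) to apply, the origin is the unique zero of $Y$ in $U=\{(x,y,z):x^2+y^2<\delta^2,\ |z|<\epsilon\}$, which establishes the isolated singularity. The main obstacle I anticipate is the negative-$z$ side of the blow-up analysis: without clause (i) the smooth $\Gamma$-convergent property would only control $\{z>0\}$, and nothing would preclude a sequence of singular points of $Y$ from accumulating at the origin through $\{z<0\}$. Clauses (i) and (ii) of the isolated $\Gamma$-convergent definition are complementary in precisely this way: clause (ii) rules out zeros on the exceptional divisor $\{z=0\}$, where the blow-up analysis is silent, while clause (i) does so on the negative-$z$ side, where ordinary $\Gamma$-convergence is.
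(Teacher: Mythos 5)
Your proposal is correct and follows essentially the same route as the paper: the restriction $Y(x,y,0)=\psi(H)(x,y,0)(x\partial_x+y\partial_y)$ handled by clause (ii), the passage to the $z$-chart where the singularities of $X_{\beta_1,\beta_2}(H)$ off the divisor are shown to lie in $V(H,H_x,H_y)$ (your determinant $1+z^{2\beta_1}H_z^2>0$ is the same computation the paper does by substitution), and the smoothness of $L_z$ for $0<|z|<\epsilon$ — clause (i) supplying the negative side — to empty that set. The only cosmetic slip is calling $z^{\alpha}X_{\beta_1,\beta_2}(H)$ the strict transform (that is the pullback; the strict transform is $X_{\beta_1,\beta_2}(H)$ itself), which is immaterial since $z\neq 0$ there.
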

\begin{proof}
Notice that the vector-field $Y$ constructed in Theorem $\ref{prop:glo}$ is such that:
$$
Y(x,y,0) = C(x,y,0)( x\frac{\partial}{\partial x} + y\frac{\partial}{\partial y})
$$
and, by construction:
$$
C(x,y,0) = [- z^{c_1} \psi(H_x)^2  - z^{c_2} \psi(H_y)^2 +\psi(H)](x,y,0) = \psi(H)(x,y,0)
$$
because the natural constants $c_1$ and $c_2$ are not $0$. Since $H$ is isolated $\Gamma$-convergent, we conclude that there exists a positive real number $\delta$ such that the intersection of the singularities of $Y$ intersected with the variety $V(z)$, with the ball of radius $\delta$ and center in the origin is just the origin, i.e.:
$$
Sing(Y) \cap \{z=0\} \cap B_{\delta}(0) = \{(0,0,0)\}
$$
Thus, we only need to search singularities of $Y$ outside the plane $V(z)$, which can be done in the $z$-chart of the blowing-up of the origin. By construction, after blowing-up the origin the vector-field $Y$ is transformed into the vector-field $X_{\beta_1,\beta_2}(H)$. We now study the singularities of this vector-field:\\
\\
\textbf{Claim:} The set of singularities $Sing(X_{\beta_1,\beta_2})$ is equal to the variety $V(zH,H_x,H_y)$.
\begin{proof}
By construction of the vector-field $X_{\beta_1,\beta_2}(H)$, it is clear that $V(zH,H_x,H_y) \subset Sing(X_{\beta_1,\beta_2}) $. So consider a point $p$ in $Sing(X_{\beta_1,\beta_2}(H))$. We have that:
$$
\begin{array}{c}
\left[H_y - z^{\beta_1}H_z H_x\right](p) = 0 \\
\left[H_x + z^{\beta_1}H_z H_y \right](p)=0 \\
\left[-z^{\beta_1}(H^2_x+H^2_y) + z^{\beta_2}H\right](p)=0
\end{array}
$$
From the first equation, we have that $H_y(p) = z^{\beta_1}H_zH_x (p)$. Replacing this expression in the second equation, we get that $[H_x (1+ z^{2 \beta_1}H^2_z)] (p) = 0$, which implies that $H_x(p) = 0$. Thus, $H_y(p)=0$. Replacing this in the last equation, we finally get that either $z(p)=0$ or $H(p)=0$.
\end{proof}
Notice that the hypotheses over the polynomial $H$ implies that the varieties $L_z$ are all smooth for $z$ small enough (positive or negative) different from zero. This clearly implies that the intersection of the variety $V(H,H_x,H_y)$ with the set $\{-\epsilon<z<\epsilon\}$ is all contained in $\{z=0\}$. Furthermore, by the Claim, taking a smaller $\delta>0$ if necessary, the singularities of the vector-field $X_{\beta_1,\beta_2}(H)$ intersected with the set $\{-\epsilon<z<\epsilon\}$ is all contained in $\{z=0\}$ (which is the exceptional divisor). This finally implies that:
$$
Sing(Y) \cap B_{\delta}(0) = \{(0,0,0)\}
$$
and the origin is an isolated singularity.
\end{proof}
To finish the proof of Theorem $\ref{th:main}$, we need to prove the existence of polynomials $H$ isolated smooth connected $\Gamma$-convergent.
\begin{proposition}
If $\Gamma$ is equal to a semi-analytic variety $V(h) \cap \{x^2+y^2 \leq 1\}$, where $h$ is an adapted polynomial, then there exists a polynomial $H$ isolated smooth connected $\Gamma$-convergent.
\label{prop:IS}
\end{proposition}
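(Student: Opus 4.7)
The plan is to start from the polynomial $H_\alpha$ of Proposition \ref{prop:Bel} (with $\alpha$ chosen so that $H_\alpha$ is smooth connected $\Gamma$-convergent) and to perturb it twice so as to acquire both extra conditions of the isolated $\Gamma$-convergent definition while preserving the smooth connected $\Gamma$-convergence.

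Step one: to obtain condition (i), I would pass to the polynomial $\widetilde H(x,y,z):=H_\alpha(x,y,z^2)$, which is even in $z$. For $z_0\neq 0$ the planar curve underlying $L_{z_0}^{\widetilde H}$ coincides with the planar curve underlying $L_{z_0^2}^{H_\alpha}$, so the smoothness and connectedness guaranteed by Proposition \ref{prop:Bel} on the range $0<z_0^2<\epsilon$ transfer to both $0<z_0<\sqrt{\epsilon}$ and $-\sqrt{\epsilon}<z_0<0$; the Hausdorff convergence $L_{z_0^2}^{H_\alpha}\to\Gamma\times\{0\}$ also transfers, and $\widetilde H(x,y,0)=h(x,y)^2$, so $L_0^{\widetilde H}=V(h)$. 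Hence $\widetilde H$ is smooth connected $\Gamma$-convergent and satisfies condition (i).

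Step two: to obtain condition (ii), let $d=\deg h$ and let $N$ be the integer appearing in the construction of $H_\alpha$ in Proposition \ref{prop:Bel}. Choose an integer $K$ with $2K-2>\max(2d,2N-2)$ and a small parameter $\mu>0$, and define
\[
H^*(x,y,z) := \widetilde H(x,y,z) + \mu\,z^2\,(x^2+y^2)^K.
\]
The added term is even in $z$ and vanishes at $z=0$, so $H^*(x,y,0)=h(x,y)^2$ is unchanged. A monomial count shows that every term of $\widetilde H$ satisfies $i+j-k\leq\max(2d,2N-2)$, whereas every monomial of $\mu z^2(x^2+y^2)^K$ satisfies $i+j-k=2K-2$; by the choice of $K$ this is strictly larger, so $\phi(H^*)=2K-2$ and
\[
\psi(H^*)(x,y,0)=\mu\,(x^2+y^2)^K,
\]
whose only real zero is the origin, yielding condition (ii).

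The last step is to check that $H^*$ is still smooth connected $\Gamma$-convergent. Since the added term is positive and grows like $(x^2+y^2)^K$ at infinity while being $O(z_0^2)$ on any fixed ball, for small $z_0$ the level set $L_{z_0}^{H^*}$ is confined to a bounded neighborhood of $\Gamma$ and is close in Hausdorff distance to $L_{z_0}^{\widetilde H}$, so the Hausdorff convergence to $\Gamma\times\{0\}$ is preserved; and for small $\mu>0$ the smoothness and connectedness of the compact $1$-manifold $L_{z_0}^{\widetilde H}$ are stable under the perturbation and pass to $L_{z_0}^{H^*}$, with evenness in $z$ extending smoothness to $z_0<0$. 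The main obstacle is the uniformity of this perturbation argument as $z_0\to 0$: because the $(x,y)$-gradient of $\widetilde H$ vanishes on $L_0^{\widetilde H}=V(h)$, no uniform lower bound on $|\nabla_{(x,y)}\widetilde H|$ is available along the family $L_{z_0}^{\widetilde H}$, so one must invoke the finer quantitative description of the level curves given in the proof of Propositions $2.1$--$2.2$ of \cite{Bel} to guarantee that $0$ remains a regular value of $(x,y)\mapsto H^*(x,y,z_0)$ for all sufficiently small $z_0>0$ and all sufficiently small $\mu>0$.
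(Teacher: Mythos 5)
Your handling of condition (ii) is correct and is a genuinely different device from the paper's: you add $\mu z^2(x^2+y^2)^K$ with $2K-2$ strictly exceeding $i+j-k$ for every monomial already present, so that $\phi(H^*)=2K-2$ and $\psi(H^*)(x,y,0)=\mu(x^2+y^2)^K$; the paper instead multiplies the perturbation $g$ by $f(x,y)=((x-x_0)^2+(y-y_0)^2)^M$ with $(x_0,y_0)\notin V(h)\cup B_1(0)$ and $M$ large, which raises $\phi(fg)$ above $\phi(h^2)$ and above $4$ and makes the origin an isolated zero of $\psi(\widetilde H_\alpha)(x,y,0)=\psi(fg)(x,y,0)$. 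Likewise, the paper obtains condition (i) not by your $z\mapsto z^2$ symmetrization but by observing that Lemma 3.3 of \cite{Bel} already yields smoothness of $L_z$ for small negative $z$.

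The genuine gap is your final step. The whole difficulty of the proposition is to verify that the modified polynomial remains smooth connected $\Gamma$-convergent, and you do not close this: you correctly identify that no uniform lower bound on $|\nabla_{(x,y)}\widetilde H|$ is available along $L_{z_0}$ as $z_0\to 0$ (the level curves degenerate onto $V(h)$, which is singular at the points of $V(h,h_x,h_y)$), and you then defer to ``the finer quantitative description'' in \cite{Bel} without checking that it applies to your polynomial. It is not clear that it does: after your substitution and addition, the coefficient of $z^2$ becomes $\widetilde g+\sum_{i=1}^{N}\binom{N}{i}x^{2i}y^{2(N-i)}+\mu(x^2+y^2)^K$, whose leading homogeneous part is no longer of the shape $\sum\binom{N}{i}x^{2i}y^{2(N-i)}$ assumed in the family analyzed in \cite{Bel}. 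The point of the paper's choice of $f$ is precisely that $fg$ retains that shape (with $N$ replaced by $N+M$) and that $f$ is a positive unit on a neighborhood of $B_1(0)$, so Propositions $2.1$--$2.2$ and Lemma $3.3$ of \cite{Bel} apply mutatis mutandis. Without an analogous structural argument, or a direct uniform transversality estimate near the singular points of $V(h)$, your $H^*$ is not shown to be $\Gamma$-convergent, let alone smooth connected $\Gamma$-convergent; the soft ``stability under small perturbation'' reasoning fails exactly in the regime $z_0\to 0$ that matters.
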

%
%
\begin{proof}
We recall that:
$$
H_{\alpha}(x,y,z) = h(x,y)^2 + g(x,y,z)  + \alpha z^4
$$
where the parameter $\alpha$ takes values in the interval $[0,1]$ and the polynomial $g(x,y,z)$ has the form $z(\widetilde{g}(x,y,z) + \sum^N_{i=1} \binom{N}{i} x^{2i}y^{2(N-i)} )$ for some $N \in \mathbb{N}$, where $\widetilde{g}(x,y,z)$ is a polynomial with degree strictly smaller than $2N$.\\
\\
In particular, notice that $(0,0)$ is an isolated solution of the equation $\psi(g)(x,y,0)=0$. So, if we can choose $g$ such that $\phi(g) \geq \phi(h^2)$ and $\phi(g)>4$, we have that $(0,0)$ is an isolated solution of $\psi(H)(x,y,0)= 0$.\\
\\
But this property is not generally true for the polynomial $g(x,y,z)$ given in $\cite{Bel}$. So consider $f(x,y) = ((x-x_0)^{2} + (x-y_0)^{2})^M$ where $(x_0,y_0)$ is a point outside the ball with radius one and the variety $V(h)$, i.e. $p \notin V(h) \cup B_1(0)$, and $M$ is a natural number sufficiently big. It is clear that:
\begin{itemize}
\item The polynomial $ g(x,y,z) f(x,y)$ is equal to $z (\bar{g}(x,y,z) + \sum^{N+M}_{i=1} \binom{N+M}{i} x^{2i}y^{2(N-i)} )$, where $\bar{g}(x,y,z)$ is a polynomial with degree strictly smaller than $2(N+M)$;
\item For $M$ sufficiently big, we have that: $\phi(g f) \geq \phi(h^2)$ and $\phi(g f)>4$.
\end{itemize}
So, we consider:
$$
\widetilde{H}_{\alpha}(x,y,z) = h(x,y)^2 + f(x,y)g(x,y,z) -\alpha z^{4}
$$
It is now clear that property $[ii]$ of the definition of isolated $\Gamma$-convergent is satisfied. The proof that this polynomials (for almost all $\alpha$ in $[0,1]$) is isolated smooth connected $\Gamma$-convergent now follows, mutatis mutandis, the same proof of Propositions $[2.1-2.2]$ of $\cite{Bel}$. Nevertheless, we remark two crucial details for the necessary adaptations:
\begin{itemize}
\item Lemma $3.3$ of $\cite{Bel}$ contains the proof of property $[i]$ of the definition of isolated $\Gamma$-convergent, although it does not enunciate it;
\item The polynomial $f(x,y)$ is a positive unity over any small enough neighborhood of $B_1(0)$.
\end{itemize}
\end{proof}
\bibliographystyle{alpha}

\end{document}